\newtheorem{thm}     {Theorem}[section]
\newtheorem{prop}    [thm]{Proposition}
\newtheorem{lemma}   [thm]{Lemma}
\newtheorem{remark}   [thm]{Remark}
\newcommand{\N}{\mathbb N}
\newcommand{\R}{\mathbb R}
\newcommand{\Z}{\mathbb Z}
\def\rank{{\rm rank}\,}
\def\<{\langle}
\def\>{\rangle}
\begin{document}

\title{{\bf Scarcity of periodic orbits in outer billiards}}
\author{Alexander Tumanov\footnote{
Partially supported by Simons Foundation grant.}}
\date{}
\maketitle
\rightline{\it To the memory of my dear teacher Gennadi Henkin}
\bigskip
\bigskip

University of Illinois, Department of Mathematics,
1409 West Green St., Urbana, IL 61801,
{\it tumanov@illinois.edu}
\bigskip

{\bf Abstract.}
We give a simple proof of the result of \cite{Tum-Zhar} that the set of period 4 orbits in planar outer billiard with piecewise smooth convex boundary has empty interior, provided that no four corners of the boundary form a parallelogram.
We also obtain results on period 5 and 6 orbits.

\bigskip

MSC: 37D50, 37C25

Key words: Outer billiard, periodic orbit, exterior differential system.
\bigskip

\section{Introduction}

In this paper we study the set of periodic orbits in
planar outer billiard.

Let $\Gamma$ be a piecewise smooth convex curve in the plane $\R^2$. The dynamics of the outer (or dual) billiard is defined in the exterior of $\Gamma$ as follows.
The outer billiard map sends a point $z_1$ to the point $z_2$, so that the line between the two points is a supporting line for $\Gamma$ and meets $\Gamma$ at the midpoint of the segment $[z_1,z_2]$. 

The outer billiard was originally introduced by Bernhard Neumann (1960-s) as a model for a stability problem.
Moser \cite{Moser} attracted much attention to the question
whether all orbits in an outer billiard are bounded. 
Schwartz \cite{Schwartz} did extensive work on the question
for polygonal outer billiards.
See \cite{Tabachnikov} for more discussion and related results.

A relevant question on the outer billiard
is whether the set of periodic points (orbits) must
have measure zero, or in a milder version -- empty interior.
The interest to this question comes from the classical
Birkhoff billiard, for which the question is related to
asymptotic distribution of large eigenvalues of the
Laplacian \cite{Ivrii}.

The question has turned out to be surprisingly hard
and remains open.
For the classical planar billiard, Rychlik \cite{Rychlik}
showed that the set of period 3 points has measure zero. 
The proof involved symbolic calculations that were
later removed by Stojanov \cite{Stojanov}. 
Wojtkovski \cite{Wojtkovski} gave a simpler proof relying on Jacobi's fields.
Vorobets \cite{Vorobets} extended this result to higher dimensional billiards. Glutsyuk and Kudryashov
\cite{Glutsyuk-Kudryashov} showed
that the set of period 4 points in a planar billiard
has measure zero.
For the planar outer billiard,
Genin and Tabachnikov \cite{Gen-Tab}
proved that the set of period 3 points has empty
interior. The author and Zharnitsky \cite{Tum-Zhar} proved
that the set of period 4 points has empty interior
unless there are four corners of $\Gamma$ that form a parallelogram. The proof in \cite{Tum-Zhar} involves
computer aided computations.

In this paper we give a simple proof of the result of \cite{Tum-Zhar} avoiding heavy computations.
In addition to the result about period 4,
we obtain results about period 5 and 6 orbits.

Following \cite{Tum-Zhar}, we use an approach
based on exterior differential systems (EDS).
This approach was introduced by Baryshnikov, Landsberg,
and Zharnitsky, see \cite{Bar-Zhar, Landsberg}.
An EDS on a smooth manifold $X$ is a subbundle $D$ of the
tangent bundle $T(X)$. We call a smooth manifold $M\subset X$ an integral manifold for $D$
if the tangent space of $M$ is contained in $D$.
There is an explicitly defined EDS $D$ in $\R^{2n}$
so that 2-dimensional integral manifolds of $D$ correspond
to outer billiards with open sets of period $n$ points
(see Proposition \ref{reduction-to-EDS}).
Thus, the question whether there is an outer billiard with
an open set of periodic points reduces to the question
whether there is a 2-dimensional integral manifold for $D$.

There is an algorithm by E. Cartan and K\"ahler
(see, e.g. \cite{Ivey}) that answers the question
whether an EDS has integral manifolds
of certain dimension, however, the algorithm often leads to
calculations that are difficult to accomplish
even on a computer.
The first step in the algorithm consists of describing
integral elements, which are subspaces of $D$ of given dimension (in our case, of dimension 2) that can
be candidates for tangent spaces to $M$.
They are subspaces on which the defining 1-forms
for $D$ vanish together with their exterior differentials.
In this paper we do not go beyond this first step.
Our main idea is based on an observation that the convexity
of the billiard curve $\Gamma$ can be formulated in terms
of integral elements. It turns out that in some cases
that we describe here, there are no integral elements that
arise from a convex curve, therefore there does not exist
an integral manifold that corresponds to a convex outer billiard.

We point out, however, that for the conventional, inner billiards, our approach does not work. That is, the convexity
of the billiard curve does not affect the existence of
integral elements for the corresponding EDS.

The paper was written for a special volume of JGA
dedicated to the memory of Gennadi Henkin.
I am deeply grateful to Gennadi for advising me as a student,
and for life-long support and inspiration.

\section{Outer billiard and EDS}

Let $G\subset\R^2$ be a convex bounded domain with piecewise
$C^2$ boundary $\Gamma$. We recall the definition of the
outer billiard map $F$. Let $z_1\in \R^2\setminus\bar G$. We put $F(z_1)=z_2$ if the oriented line $L$ from $z_1$ to
$z_2$ is a supporting line for $G$ that meets $\Gamma$ at the midpoint $(z_1+z_2)/2$ and for definiteness $G$ lies on the left of $L$. The line $L$ is either tangent to $\Gamma$ or passes through the corner.

The map $F$ is well defined in the exterior of $G$ except
on supporting lines $L$ for which the intersection
$\Gamma\cap L$ is not a single point. These lines are countable, and their union $E$ is nowhere dense.
The map $F$ is $C^1$ smooth at $z_1$ unless $\Gamma$ has zero
curvature at the midpoint $(z_1+z_2)/2$ or $L$ is one of the
two tangent lines at a corner.
The union $E_0$ of tangent lines $L$ through points of zero curvature is a closed nowhere dense set, and $E\subset E_0$.

The orbit $O(z)$ of the point $z$ is the set
$O(z)=\{F^j(z): j\in\N\}$, here $F^j$ is the $j$-th iterate
of $F$. We call $z$ a period $n$ point if $F^n(z)=z$.
In this case we also call $O(z)$ a period $n$ orbit.

We are concerned with the question whether the set of periodic points can have nonempty interior. Following \cite{Tum-Zhar} we reduce the problem to the existence of integral manifolds of a certain exterior differential system (EDS), which we now introduce.

We view the set of period $n$ orbits as a subset in
\[
\R^{2n}=\{(z_1,\ldots,z_n) :
z_i=(x_i,y_i)\in\R^2, i\in\Z/n\Z \}.
\]
Define the differential forms
\begin{equation}\label{theta}
\theta_i=\det(d\bar r_i,r_i), \qquad
r_i=\frac{z_i-z_{i+1}}{2}, \qquad
\bar r_i=\frac{z_i+z_{i+1}}{2}, \qquad
i\in\Z/n\Z.
\end{equation}
Here $\det$ stands for the determinant.
We will see (Proposition \ref{coframe}) that if no three consecutive points
$z_i, z_{i+1}, z_{i+2}$ are collinear, then the forms
$\theta_i$ are linearly independent, and their common zero
set forms a rank $n$ subbundle $D$ of the tangent bundle of $\R^{2n}$, an EDS.
We call a smooth manifold $M\subset\R^{2n}$
an {\it integral manifold} for $D$
if the tangent space of $M$ is contained in $D$, that is,
$\theta_i|_M=0$ for all $i$.

\begin{prop}{\rm \cite{Tum-Zhar}}\label{reduction-to-EDS}
Let the set of period $n$ points have non-empty interior.
Then there exists $M$, an integral manifold for $D$ such that
$\dim M=2$ and $dx_1\wedge dy_1|_M$ is non-vanishing.
\end{prop}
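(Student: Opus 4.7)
The plan is to construct $M$ explicitly from an open set of period $n$ points and then verify the two conclusions directly. Let $U_0 \subset \R^2 \setminus \bar G$ be a non-empty open set of period $n$ points. The first step is to pass to an open subset $U \subset U_0$ on which $F, F^2, \ldots, F^{n-1}$ are all $C^1$ smooth and on which none of the midpoints $\bar r_i = (F^{i-1}(z)+F^i(z))/2$ lies at a corner of $\Gamma$ or at a point of zero curvature. This is possible because the exceptional set $E_0$ was already observed to be closed and nowhere dense, and the same is true of each preimage $F^{-j}(E_0)$ together with the (countable) preimages of corners; a finite union of such sets still has empty interior, so it cannot cover $U_0$.

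Next, I would set
\[
M = \{(z, F(z), F^2(z), \ldots, F^{n-1}(z)) : z \in U\} \subset \R^{2n}.
\]
Because projection onto the first factor $z_1$ is a $C^1$ diffeomorphism from $M$ to $U$, the set $M$ is a $2$-dimensional $C^1$ submanifold, and $dx_1\wedge dy_1|_M$ is the pullback of a nonvanishing area form on $U$, hence nonvanishing.

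It remains to show $\theta_i|_M = 0$ for each $i \in \Z/n\Z$. By the definition of the outer billiard map, along $M$ the midpoint $\bar r_i$ lies on $\Gamma$, so the map $\bar r_i : M \to \R^2$ factors through the curve $\Gamma$ at a point where it is smooth (by our choice of $U$). Hence for every tangent vector $v$ of $M$, the vector $d\bar r_i(v)$ is tangent to $\Gamma$ at $\bar r_i$. On the other hand, the supporting line of $\Gamma$ through $\bar r_i$ is the oriented line from $z_i$ to $z_{i+1}$, whose direction is proportional to $r_i$; since the curvature is non-zero, this tangent direction is unique. Therefore $d\bar r_i$ is everywhere parallel to $r_i$ on $M$, and $\theta_i = \det(d\bar r_i, r_i)$ vanishes on $M$.

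The main obstacle is really the preliminary smoothness step rather than the vanishing computation. One has to be careful that the outer billiard map and all its relevant iterates are smooth on a full open subset of $U_0$ and that no midpoint sits at a corner or at a flat point of $\Gamma$, since at such points the argument identifying $d\bar r_i$ with $r_i$ breaks down. Once the exceptional locus is shown to be nowhere dense and $U$ is chosen to avoid it, the geometric content of the outer billiard reflection rule — that the supporting line is tangent to $\Gamma$ at the midpoint — immediately forces each $\theta_i$ to vanish on $M$.
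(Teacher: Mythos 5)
Your construction of $M$ and the tangency argument at smooth points of $\Gamma$ match the paper's proof, but your preliminary reduction contains a genuine error: you cannot, in general, shrink $U$ so that no midpoint $\bar r_i$ lies at a corner of $\Gamma$. The set of points $z$ whose supporting line touches $\Gamma$ exactly at a given corner $p$ is an open sector (all lines through $p$ with direction strictly between the two one-sided tangents at $p$), not a countable or nowhere dense set. For a polygonal $\Gamma$ \emph{every} midpoint is a corner, and these are exactly the cases the paper cares about --- it explicitly notes that a triangle has an open set of period $6$ orbits, and the $n=4$ theorem hinges on orbits whose midpoints are four corners forming a parallelogram. So your claim that ``the (countable) preimages of corners'' can be excised is false, and with it your later appeal to a smooth point of $\Gamma$ with a unique tangent direction breaks down on precisely the orbits that matter.

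The fix is the one the paper uses: do not avoid corners, handle them. If $\bar r_i(z_0)$ is a corner $p$ and $F$ is $C^1$ at the relevant point (i.e., $z_i$ is not on one of the two limiting tangent lines at $p$, a nowhere dense exceptional set), then $z_i$ lies in the open sector where the tangency point is identically $p$, so $\bar r_i$ is locally constant and $d\bar r_i=0$, whence $\theta_i=\det(d\bar r_i,r_i)=0$ trivially. At non-corner points your argument is fine, though the appeal to nonzero curvature is unnecessary: at any smooth boundary point the supporting line is the tangent line, so $d\bar r_i(v)$ is proportional to $r_i$ regardless of curvature (zero-curvature points need to be avoided only to guarantee $C^1$ smoothness of the iterates of $F$, and for that purpose $E_0$ is indeed closed and nowhere dense).
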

The converse is also true, but we do not need it here.
For completeness, we give a proof.
\begin{proof}
Let the set of period $n$ points have non-empty interior.
Then there exists an open set $U\subset\R^2$ such that
all iterates $F^j$, $1\le j\le n-1$, are $C^1$ smooth in $U$, and for every $z_1\in U$, we have $F^n(z_1)=z_1$. Define
\begin{equation}\label{integral-manifold}
M=\{(z_1,\dots,z_n):z_1\in U, z_{i+1}=F(z_i), i\in\Z/n\Z \}.
\end{equation}
Clearly $\dim M=2$ and $dx_1\wedge dy_1|_M$ is non-vanishing.
We claim that $M$ is an integral manifold for $D$.
Let $z(t)=(z_1(t),\ldots,z_n(t))$ be a curve in $M$.
Then $\bar r_i(t)\in\Gamma$. If for some $t$, the midpoint
$\bar r_i(t)$ is a corner, then $\bar r_i'(t)=0$, hence
$\theta_i(z'(t))=0$. Otherwise, $\bar r_i'(t)$ is tangent
to $\Gamma$ at $\bar r_i(t)$. Then the vectors
$\bar r_i'(t)$ and $r_i(t)$ are collinear, and again
$\theta_i(z'(t))=0$ as desired.
\end{proof}

\section{Integral elements}

We say that a 2-subspace $\sigma\subset T_z \R^{2n}$,
$z\in\R^{2n}$, is an {\it integral element} for $D$ if
$\theta_i|_\sigma=0$ and $d\theta_i|_\sigma=0$ for all $i$.
Clearly, all tangent spaces to $M$ defined by \eqref{integral-manifold} are integral elements.
To describe integral elements, we include the forms $\theta_i$ in a coframe. Following \cite{Tum-Zhar}, we put
\[
\omega_i=\det(dr_i,r_i).
\]
\begin{prop}{\rm \cite{Tum-Zhar}}\label{coframe}
The forms $\theta_i, \omega_i$, $i\in\Z/n\Z$, are linearly
independent provided that no three consecutive points are
collinear.
\end{prop}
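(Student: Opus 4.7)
The plan is to replace the given $2n$ forms by an equivalent collection whose coefficient matrix has a visible block-diagonal structure. The key algebraic identity is
\[
\bar r_i + r_i = z_i, \qquad \bar r_i - r_i = z_{i+1},
\]
which, combined with bilinearity of the determinant in its first argument, gives
\[
\theta_i + \omega_i = \det(dz_i, r_i), \qquad \theta_i - \omega_i = \det(dz_{i+1}, r_i).
\]
Thus linear independence of $\{\theta_i,\omega_i\}$ is equivalent to linear independence of the $2n$ forms $\mu_i := \det(dz_i, r_i)$ and $\nu_i := \det(dz_{i+1}, r_i)$, $i\in\Z/n\Z$.

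Next I would reindex the second family by setting $\nu_{i-1} = \det(dz_i, r_{i-1})$, and regroup the collection $\{\mu_i,\nu_{i-1}\}_{i\in\Z/n\Z}$ by the index $i$. The crucial observation is that for each fixed $i$, both $\mu_i$ and $\nu_{i-1}$ are 1-forms involving only $dx_i$ and $dy_i$ (their coefficients depend on the $r_j$'s, but those are treated as constants here). Hence, in the coordinate basis $\{dx_j, dy_j\}_{j\in\Z/n\Z}$, the coefficient matrix of the whole collection is block diagonal with $n$ blocks of size $2\times 2$.

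A direct calculation then shows that the $i$-th $2\times 2$ block has determinant equal, up to sign, to $\det(r_{i-1}, r_i)$. This quantity vanishes precisely when $r_{i-1}$ and $r_i$ are parallel, which by definition of $r_j$ is equivalent to collinearity of $z_{i-1}, z_i, z_{i+1}$. Under the stated hypothesis, every block is invertible, so the $2n$ forms are linearly independent, and the subbundle $D$ is well defined of rank $n$.

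The main obstacle is purely bookkeeping: keeping indices consistent under the cyclic reindexing and tracking the factors of $1/2$ and signs, which clearly do not affect linear independence. There is no deeper geometric or analytic input required; the full content of the proposition is captured by the elementary fact that collinearity of three consecutive $z$-points degenerates exactly one of the $2\times 2$ blocks and nothing else.
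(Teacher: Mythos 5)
Your proof is correct and follows essentially the same route as the paper: both pass to the forms $\det(dz_i,r_{i-1})$ and $\det(dz_i,r_i)$ via the sum/difference identity and exploit the resulting block structure indexed by $dz_i$, with nondegeneracy governed by $\Delta_i=\det(r_{i-1},r_i)\ne 0$. The only cosmetic difference is that the paper explicitly inverts each $2\times 2$ block to express $dz_i$ in terms of $\theta_j,\omega_j$, whereas you just compute the block determinants.
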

For completeness and subsequent use we include a proof.
\begin{proof}
It suffices to express the standard frame $dz_i$
in terms of the forms $\theta_i, \omega_i$.
We first recall how to solve the system
\[
\det(x,a)=\alpha,\qquad
\det(x,b)=\beta
\]
for given $a,b\in\R^2$ and $\alpha,\beta\in\R$. One verifies if $\Delta=\det(a,b)\ne0$, then the system has a unique solution
\begin{equation}\label{solving-system-with-det}
  x=\frac{\beta a-\alpha b}{\Delta}.
\end{equation}
We introduce
\[
\Delta_{ij}=\det(r_i,r_j),\qquad
\Delta_i=\Delta_{i-1, i}.
\]
Geometrically, $\Delta_i$ is the half of the area of the
triangle with vertices $z_{i-1}, z_i, z_{i+1}$.
These points are collinear exactly when $\Delta_i=0$.
We have
\[
z_i=\bar r_i+r_i,\qquad
z_{i+1}=\bar r_i-r_i.
\]
Accordingly, we get
\[
\det(dz_i,r_i)=\theta_i+\omega_i,\qquad
\det(dz_{i+1},r_i)=\theta_i-\omega_i.
\]
Solving the system
\[
\det(dz_i,r_{i-1})=\theta_{i-1}-\omega_{i-1},\qquad
\det(dz_i,r_i)=\theta_i+\omega_i
\]
using \eqref{solving-system-with-det} yields
\begin{equation}\label{dz-i}
dz_i=\frac{(\omega_i+\theta_i)r_{i-1}
+(\omega_{i-1}-\theta_{i-1})r_i}{\Delta_i},
\end{equation}
as desired.
\end{proof}

For a period $n$ orbit in outer billiard,
no three consecutive points $z_{i-1}, z_i, z_{i+1}$ are collinear, hence $\Delta_i>0$,
and $\theta_i, \omega_i, i\in\Z/n\Z$ form a coframe,
which we assume from now on.

\begin{prop}{\rm \cite{Tum-Zhar}}
\label{prop-on-integral-element}
Let $\sigma\subset T_z \R^{2n}$ be a 2-subspace such that
$\theta_i|_\sigma=0$ for all $i$. Then $\sigma$ is an integral element for $D$ if and only if for all $i,j\in\Z/n\Z$ we have
\begin{equation}\label{equation-on-integral-element}
 \Delta^{-1}_i\omega_{i-1}\wedge\omega_i|_\sigma
=\Delta^{-1}_j\omega_{j-1}\wedge\omega_j|_\sigma
\end{equation}
\end{prop}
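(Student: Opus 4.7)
The plan is straightforward. Since $\theta_i|_\sigma = 0$ is already assumed, the only remaining condition for $\sigma$ to be an integral element is $d\theta_i|_\sigma = 0$ for all $i$. I would compute $d\theta_i|_\sigma$ explicitly in the coframe of Proposition \ref{coframe} and identify the resulting equations with \eqref{equation-on-integral-element}.

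The first step is to compute $d\theta_i$ globally on $\R^{2n}$. Writing $\theta_i = \det(d\bar r_i, r_i)$ and expanding via $\bar r_i = (z_i + z_{i+1})/2$ and $r_i = (z_i - z_{i+1})/2$ gives
$$
  4\theta_i = \det(dz_i, z_i) - \det(dz_i, z_{i+1}) + \det(dz_{i+1}, z_i) - \det(dz_{i+1}, z_{i+1}).
$$
A direct differentiation shows $d\det(dz_a, z_b) = -dx_a \wedge dy_b - dx_b \wedge dy_a$, which is symmetric in $a, b$. So the two cross terms cancel and I expect to obtain the clean identity
$$
  d\theta_i = \tfrac{1}{2}(\pi_{i+1} - \pi_i), \qquad \pi_j := dx_j \wedge dy_j.
$$

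The second step is to restrict $\pi_j$ to $\sigma$. Setting $\theta_k = 0$ in formula \eqref{dz-i} gives
$dz_i|_\sigma = \Delta_i^{-1}(\omega_i r_{i-1} + \omega_{i-1} r_i)$. Taking the wedge product of its two scalar components and using $\omega_k \wedge \omega_k = 0$, only a single term survives; it involves the scalar $\det(r_{i-1}, r_i) = \Delta_i$ and yields $\pi_i|_\sigma = -\Delta_i^{-1}\,\omega_{i-1}\wedge\omega_i$.

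Combining the two steps,
$$
  d\theta_i|_\sigma = \tfrac{1}{2}\bigl(\Delta_i^{-1}\,\omega_{i-1}\wedge\omega_i - \Delta_{i+1}^{-1}\,\omega_i\wedge\omega_{i+1}\bigr),
$$
so $d\theta_i|_\sigma = 0$ for all $i$ is exactly the statement that the 2-form $\Delta_i^{-1}\,\omega_{i-1}\wedge\omega_i|_\sigma$ is the same for consecutive $i$, hence (by transitivity around the cycle $\Z/n\Z$) for all pairs $i, j$, which is \eqref{equation-on-integral-element}. No substantive obstacle is expected; the only care needed is with signs in the expansion of determinants and wedge products.
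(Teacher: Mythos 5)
Your proposal is correct and follows essentially the same route as the paper: the paper likewise computes $d\theta_i=-\det(d\bar r_i,dr_i)=-\tfrac14\bigl(\det(dz_i,dz_i)-\det(dz_{i+1},dz_{i+1})\bigr)$ (note $\det(dz_j,dz_j)=2\,dx_j\wedge dy_j$, so this is exactly your $\tfrac12(\pi_{i+1}-\pi_i)$) and then restricts to $\sigma$ via \eqref{dz-i} with the $\theta$'s set to zero, obtaining $\det(dz_i,dz_i)|_\sigma=-2\Delta_i^{-1}\omega_{i-1}\wedge\omega_i$. The only cosmetic difference is that you expand the four determinant terms in coordinates and cancel the cross terms, whereas the paper differentiates $\det(d\bar r_i,r_i)$ directly.
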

For completeness we include a proof.
\begin{proof}
Differentiating \eqref{theta}, we get
\begin{equation}\label{d-theta-i}
d\theta_i=-\det(d\bar r_i,dr_i)
=-\frac{1}{4}(\det(dz_i,dz_i)-\det(dz_{i+1},dz_{i+1})),
\end{equation}
where the determinants are calculated using the wedge
product. Note that the determinant of two 1-forms is
commutative.
Since $\theta_i|_\sigma=0$, the equation \eqref{dz-i}
turns into
\begin{equation*}
dz_i|_\sigma=\Delta^{-1}_i(\omega_i r_{i-1}+\omega_{i-1}r_i).
\end{equation*}
Then we obtain
\begin{equation}\label{dzi-dzi}
\det(dz_i,dz_i)
=\Delta^{-2}_i \det(\omega_i r_{i-1}+\omega_{i-1}r_i,
\omega_i r_{i-1}+\omega_{i-1}r_i)
=-2 \Delta^{-1}_i \omega_{i-1}\wedge\omega_i.
\end{equation}
The equation \eqref{equation-on-integral-element}
now follows by \eqref{d-theta-i} and \eqref{dzi-dzi}.
\end{proof}

We now derive parametric equations of integral elements.
We introduce a $n\times n$ matrix
\begin{equation}\label{matrix-C}
C=\begin{pmatrix}
    c_1 & \Delta_1 & 0 & \dots & 0   & \Delta_2 \\
    \Delta_3 & c_2 & \Delta_2 & \dots & 0   & 0 \\
    \dots & \dots & \dots & \dots & \dots & \dots \\
    \Delta_n & 0 & 0 & \dots   & \Delta_1 & c_n
\end{pmatrix}
\end{equation}

\begin{prop}
Let $\sigma\subset T_z \R^{2n}$ be a 2-subspace such that
$\theta_i|_\sigma=0$ for all $i$. Then $\sigma$ is an integral element if and only if there exist real parameters
$c_i, 1\le i\le n$,
such that $\rank C=n-2$, and $C\omega|_\sigma=0$.
Here $\omega$ is the vector with components $\omega_i$.
\end{prop}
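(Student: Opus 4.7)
The plan is to use Proposition \ref{prop-on-integral-element} as the bridge between integrality and the matrix equation.

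\textbf{Forward direction.} Suppose $\sigma$ is an integral element. Proposition \ref{prop-on-integral-element} provides $\kappa\in\R$ and a fixed nonzero 2-form $\Omega$ on $\sigma$ satisfying $\omega_{i-1}\wedge\omega_i|_\sigma=\kappa\Delta_i\Omega$ for all $i$. I focus on the relevant case $\kappa\ne 0$, which by \eqref{dzi-dzi} coincides with the non-degeneracy $dx_i\wedge dy_i|_\sigma\ne 0$ singled out by Proposition \ref{reduction-to-EDS}. Then each adjacent pair $\omega_{i-1}|_\sigma,\omega_i|_\sigma$ is linearly independent and forms a basis of the 2-dimensional $\sigma^*$. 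Writing $\omega_{i+1}|_\sigma=a_i\omega_{i-1}|_\sigma+b_i\omega_i|_\sigma$, wedging with $\omega_i$, and comparing with $\omega_i\wedge\omega_{i+1}|_\sigma=\kappa\Delta_{i+1}\Omega$ forces $a_i=-\Delta_{i+1}/\Delta_i$. Setting $c_i:=-b_i\Delta_i$ recasts this expansion as the $i$-th row of $C\omega|_\sigma=0$, namely $\Delta_{i+1}\omega_{i-1}+c_i\omega_i+\Delta_i\omega_{i+1}\equiv 0$ on $\sigma$.

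\textbf{Rank count.} Every row of $C$ is a linear relation among $\omega_1|_\sigma,\ldots,\omega_n|_\sigma\in\sigma^*$; since the $\omega_j|_\sigma$ span the 2-dimensional $\sigma^*$, all such relations form an $(n-2)$-dimensional space, so $\rank C\le n-2$. For the reverse inequality, consider a vanishing combination $\sum_{i=1}^{n-2}\lambda_i(\text{row }i)=0$. The column-$n$ entry reads $\lambda_1\Delta_2=0$ and the column-$(n-1)$ entry reads $\lambda_{n-2}\Delta_{n-2}=0$, forcing $\lambda_1=\lambda_{n-2}=0$. The remaining column equations then propagate inward from both ends and kill every $\lambda_i$, showing rows $1,\ldots,n-2$ are linearly independent and $\rank C\ge n-2$.

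\textbf{Converse.} Given such $c_i$ with $C\omega|_\sigma=0$, wedge the $i$-th row relation with $\omega_i$. The middle term $c_i\omega_i\wedge\omega_i$ vanishes, leaving $\Delta_{i+1}\omega_{i-1}\wedge\omega_i|_\sigma=\Delta_i\omega_i\wedge\omega_{i+1}|_\sigma$, i.e., $\Delta_i^{-1}\omega_{i-1}\wedge\omega_i|_\sigma=\Delta_{i+1}^{-1}\omega_i\wedge\omega_{i+1}|_\sigma$. Chained over $i$, this is the condition of Proposition \ref{prop-on-integral-element}, so $\sigma$ is integral.

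I expect the main obstacle to be the rank lower bound. Once one observes the peeling structure of columns $n$ and $n-1$ in $C$ (each meeting only one of the first $n-2$ rows), the inductive independence argument is straightforward, but it relies on the specific tridiagonal-plus-corners shape of $C$ and on the $\Delta_i$ all being strictly positive. The wedge trick in the converse and the linear expansion in the forward direction are both routine once Proposition \ref{prop-on-integral-element} is in hand.
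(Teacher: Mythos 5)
Your argument is essentially the paper's: both routes pass through Proposition \ref{prop-on-integral-element}, expand each $\omega_{i+1}|_\sigma$ in the basis formed by the adjacent pair, pin down the leading coefficient by wedging with $\omega_i$ and comparing with \eqref{equation-on-integral-element}, and define $c_i$ as $-\Delta_i$ times the remaining coefficient, arriving at \eqref{equation-on-integral-element-i}. Your rank count (at most $n-2$ because the rows of $C$ are relations among forms spanning the $2$-dimensional $\sigma^*$; at least $n-2$ by peeling off rows via columns $n$ and $n-1$) establishes the same bound the paper gets by exhibiting a nonzero $(n-2)$-minor in the lower left corner, and your wedge computation for the converse correctly fills in a step the paper dismisses as clear.

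The one place you fall short of the statement is the sentence ``I focus on the relevant case $\kappa\ne 0$.'' The proposition is asserted for \emph{every} $2$-subspace $\sigma$ with $\theta_i|_\sigma=0$, with no non-degeneracy hypothesis, so the case $\kappa=0$ (all wedges $\omega_{i-1}\wedge\omega_i|_\sigma$ vanishing) must be excluded, not set aside; if it could occur, the expansion of $\omega_{i+1}|_\sigma$ in an adjacent pair would not even be available. Appealing to Proposition \ref{reduction-to-EDS} does not close this, since that proposition concerns tangent planes of a particular integral manifold, not an arbitrary integral element. The paper rules the degenerate case out using Proposition \ref{coframe}: because $\{\theta_i,\omega_i\}$ is a coframe and all $\theta_i$ vanish on $\sigma$, the restrictions $\omega_i|_\sigma$ span the $2$-dimensional $\sigma^*$, which is incompatible with the forms all being multiples of one another; hence $\kappa\ne0$ holds automatically and every adjacent pair is a basis. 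Insert that coframe observation in place of your restriction and the proof is complete.
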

\begin{proof}
Suppose $\sigma$ is an integral element.
Then for every $i$ the forms $\omega_i$ and $\omega_{i+1}$ are linearly independent on $\sigma$. Otherwise, if for some $i$
they are linearly dependent, then by \eqref{equation-on-integral-element}
all the forms $\omega_i$ will be multiples of one another
on $\sigma$, which is absurd because $\dim\sigma=2$.
Since every three forms $\omega_i$ are linearly dependent on $\sigma$, for some $a_i, b_i$
we have
\begin{equation}\label{omega-a-b}
\omega_{i+2}=a_i\omega_i+b_{i+1}\omega_{i+1}.
\end{equation}
Wedge-multiplying both parts by  $\omega_{i+1}$ and using
\eqref{equation-on-integral-element} we obtain
$a_i=-\Delta_{i+1}^{-1}\Delta_{i+2}$. We put
$c_i=-b_i\Delta_i$. Then \eqref{omega-a-b} turns into
\begin{equation}\label{equation-on-integral-element-i}
\Delta_{i+2}\omega_i+c_{i+1}\omega_{i+1}+\Delta_{i+1}\omega_{i+2}=0.
\end{equation}
We put these scalar equations in a matrix-vector form
$C\omega=0$. Note that $\rank C\ge n-2$ because $C$ has a nonzero minor of order $n-2$ in the left lower corner.
Since $\dim\sigma=2$, we have exactly $\rank C= n-2$,
hence the conclusion. Clearly, the converse is also true.
\end{proof}

For every polygon with vertices $z_i, 1\le i\le n$,
we introduce a special integral element. We put
\[
d_i=\Delta_{i-1,i+1}=\det(r_{i-1},r_{i+1}).
\]
\begin{prop}\label{prop-special-int-elem}
Putting $c_i=-d_i$ defines an integral element.
\end{prop}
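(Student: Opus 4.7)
\medskip

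\noindent\emph{Proof plan.} The plan is to invoke the preceding proposition with $c_i = -d_i$. With that choice the rows of $C\omega|_\sigma = 0$ read
\[
\Delta_{j+1}\omega_{j-1} - d_j\omega_j + \Delta_j\omega_{j+1} = 0, \qquad j \in \Z/n\Z,
\]
and to produce a $2$-dimensional integral element it suffices to verify $\rank C = n - 2$. The previous proof already gave $\rank C \ge n - 2$ via the nonzero $(n-2) \times (n-2)$ minor in the lower-left corner, so the problem reduces to exhibiting two linearly independent elements of $\ker C$.

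The key input will be the elementary planar identity
\[
\det(b,c)\,a - \det(a,c)\,b + \det(a,b)\,c = 0, \qquad a, b, c \in \R^2,
\]
which is the canonical linear relation among any three vectors in $\R^2$. Applied to $a = r_{j-1}$, $b = r_j$, $c = r_{j+1}$ it produces
\[
\Delta_{j+1}\, r_{j-1} - d_j\, r_j + \Delta_j\, r_{j+1} = 0,
\]
which is exactly row $j$ of the vector equation $C\mathbf{r} = 0$ applied to $\mathbf{r} = (r_1, \dots, r_n) \in (\R^2)^n$, once $c_j = -d_j$ has been substituted. Reading the two $\R^2$-coordinates of each $r_i$ separately then exhibits two vectors in $\R^n$ that lie in $\ker C$.

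These two vectors are linearly independent because the standing assumption that no three consecutive $z_i$ are collinear forces $\Delta_i \ne 0$ for every $i$, so the $r_i$ cannot all lie on a common line through the origin; the two coordinate projections are therefore independent in $\R^n$. This yields $\dim \ker C \ge 2$, hence $\rank C = n - 2$ exactly, and the preceding proposition then delivers the integral element cut out by $\theta_i = 0$ and $C\omega = 0$. The only obstacle I anticipate is spotting the correct algebraic identity and matching the cyclic bookkeeping of indices $j-1, j, j+1$ against the placement of $\Delta_i, \Delta_{i+1}, c_i$ in $C$; once those are aligned no further computation is required.
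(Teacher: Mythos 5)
Your proposal is correct and follows essentially the same route as the paper: both verify the vector identity $\Delta_{j+1}r_{j-1}-d_j r_j+\Delta_j r_{j+1}=0$ (the canonical linear relation among three vectors in $\R^2$) to show $C\mathbf{r}=0$, and then use the non-collinearity of the $r_i$ to extract two independent kernel vectors and conclude $\rank C=n-2$. No gaps.
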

\begin{proof}
We claim that if $c_i=-d_i$, then $\rank C=n-2$.
To see that, we check that $Cr=0$, where $r$ is a formal
vector with components $r_i$. Since the vectors $r_i$
are not all collinear, we will have two linearly independent
vectors in the null-space of $C$, hence $\rank C=n-2$.

We check that
\begin{equation}\label{check-special-int-elem}
\Delta_{i+2}r_i-d_{i+1}r_{i+1}+\Delta_{i+1}r_{i+2}=0.
\end{equation}
Without loss of generality, for simplicity of notation,
we put $i=0$. The equation \eqref{check-special-int-elem}
turns into
\[
f(r_0,r_1,r_2):= r_0\det(r_1,r_2)+r_1\det(r_2,r_0)+r_2\det(r_0,r_1)=0.
\]
We note that $f$ is multilinear and alternating.
Since $r_0,r_1,r_2\in\R^2$ are linearly dependent, we have $f(r_0,r_1,r_2)=0$.
\end{proof}

\begin{remark}
  {\rm
  If $n$ is even, then putting $c_i=d_i$ also defines an integral element. In this case the vector with components
  $(-1)^i r_i$ is in the null-space of $C$.
  }
\end{remark}

\section{Convexity}

We determine what integral elements correspond to
convex billiards.

Let $\<. \, ,.\>$ denote the usual inner product in $\R^2$.
We put $s_i=|r_i|$.

\begin{lemma}\label{det-and-inner-product}
$\Delta_2\<r_0,r_1\>+\Delta_1\<r_1,r_2\>=s_1^2 d_1$.
\end{lemma}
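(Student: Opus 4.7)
The plan is to deduce this identity from the same multilinear-alternating fact that was used to prove Proposition~\ref{prop-special-int-elem}. Recall that in $\R^2$ any three vectors are linearly dependent, and the function
\[
f(a,b,c) = a\det(b,c) + b\det(c,a) + c\det(a,b)
\]
is multilinear and alternating in $(a,b,c)\in(\R^2)^3$, hence vanishes identically. This is exactly the identity invoked in the proof of Proposition~\ref{prop-special-int-elem}.

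Applying this identity to the triple $(a,b,c) = (r_0,r_1,r_2)$, I would write
\[
r_0\det(r_1,r_2) + r_1\det(r_2,r_0) + r_2\det(r_0,r_1) = 0.
\]
Then I take the inner product of both sides with $r_1$, producing
\[
\langle r_0,r_1\rangle\det(r_1,r_2) + |r_1|^2\det(r_2,r_0) + \langle r_1,r_2\rangle\det(r_0,r_1) = 0.
\]
Finally I translate this to the notation of the lemma: $\det(r_0,r_1) = \Delta_1$, $\det(r_1,r_2) = \Delta_2$, $|r_1|^2 = s_1^2$, and $\det(r_2,r_0) = -\det(r_0,r_2) = -d_1$. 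Substituting and solving for the $s_1^2 d_1$ term yields the stated identity.

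There is essentially no obstacle here; the only thing to be careful about is the sign convention, since the middle term in the cyclic identity carries $\det(r_2,r_0)$ rather than $\det(r_0,r_2)$, and this sign is precisely what turns the equation into $s_1^2 d_1$ on the right with a plus sign on the left. In fact the proof is short enough that I would present it in two displayed lines and a sentence of bookkeeping.
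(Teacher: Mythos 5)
Your proof is correct and is essentially the paper's own argument: the cyclic identity $r_0\det(r_1,r_2)+r_1\det(r_2,r_0)+r_2\det(r_0,r_1)=0$ is exactly equation \eqref{check-special-int-elem} for $i=0$ (as established in the proof of Proposition~\ref{prop-special-int-elem}), and the paper likewise obtains the lemma by taking the inner product of that relation with $r_1$. Your sign bookkeeping ($\det(r_2,r_0)=-d_1$) is right, so nothing further is needed.
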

\begin{proof}
Immediate by inner multiplying \eqref{check-special-int-elem}
by $r_{i+1}$.
\end{proof}

Let $\kappa_i$ be the curvature of $\Gamma$ at the midpoint
$\bar r_i$. We assume $\kappa_i>0$ because the tangent lines
at the points with zero curvature form a nowhere dense set.
We put $\kappa_i=\infty$ if $\Gamma$ has a corner at
$\bar r_i$.
\begin{prop}\label{prop-formula-for-ci}
Let $\{c_i\}$ define an integral element arising from a
convex billiard curve $\Gamma$. Then we have
\[
c_i=d_i-\frac{2\Delta_i\Delta_{i+1}}{\kappa_i s_i^3}.
\]
In particular, $c_i\le d_i$, where the equality occurs if
$\Gamma$ has a corner at $\bar r_i$.
\end{prop}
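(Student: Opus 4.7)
The plan is to compute the integral element $\sigma = T_z M$ arising from a convex billiard curve $\Gamma$ directly, by setting up the three-term linear recursion that the coordinates of a tangent vector to $M$ must satisfy, and then matching against \eqref{equation-on-integral-element-i} to read off $c_i$.

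First, I would parametrize $M$ near $z$ via $v_i = DF(z_{i-1})\cdots DF(z_1)\,v_1$, so that a tangent vector to $M$ takes the form $v=(v_1,\ldots,v_n)$ with $v_{i+1}=DF(z_i)v_i$. Expanding $v_i = \alpha_i r_{i-1} + \beta_i r_i$ in the basis $\{r_{i-1}, r_i\}$ (available since $\Delta_i\ne 0$), a short determinant computation gives $\theta_i(v)=\tfrac{1}{2}(\alpha_i\Delta_i - \beta_{i+1}\Delta_{i+1})$ and $\omega_i(v)=\tfrac{1}{2}(\alpha_i\Delta_i + \beta_{i+1}\Delta_{i+1})$. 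Since $\sigma$ is integral, $\theta_i|_\sigma=0$ forces $\beta_{i+1}=\alpha_i\Delta_i/\Delta_{i+1}$ and hence $\omega_i(v)=\alpha_i\Delta_i$.

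Next, I would compute $DF(z_i)v_i$ using $z_{i+1}=2\bar r_i(z_i)-z_i$: one has $v_{i+1}=2\bar r_i'(v_i)-v_i$ where $\bar r_i'(v_i)$ comes from differentiating the tangency constraint $\langle z_i-\bar r_i, N_i\rangle=0$ along the variation $v_i$. Parametrizing $\Gamma$ by arclength near $\bar r_i$ with unit tangent $T_i$, inward normal $N_i$, and Frenet relation $T_i'=\kappa_i N_i$, and using that the outer-billiard convention ($G$ on the left of $[z_i,z_{i+1}]$) forces $r_i = -s_i T_i$, this yields
\[
\bar r_i'(v_i) \;=\; \frac{\langle v_i, N_i\rangle}{\kappa_i s_i^2}\, r_i.
\]
Since $r_i\perp N_i$, one also has $\langle v_i, N_i\rangle = \alpha_i\langle r_{i-1}, N_i\rangle = \alpha_i\Delta_i/s_i$.

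Substituting into $v_{i+1}=2\bar r_i'(v_i)-v_i$, rewriting $r_{i-1} = (d_i r_i - \Delta_i r_{i+1})/\Delta_{i+1}$ via a cyclically shifted form of \eqref{check-special-int-elem}, and identifying coefficients with $v_{i+1}=\alpha_{i+1}r_i + \beta_{i+1} r_{i+1}$ should produce the three-term recursion
\[
\Delta_{i-1}\Delta_{i+1}\alpha_{i-1} + \Bigl(d_i\Delta_i - \frac{2\Delta_i^2\Delta_{i+1}}{\kappa_i s_i^3}\Bigr)\alpha_i + \Delta_i\Delta_{i+1}\alpha_{i+1}=0.
\]
Translating back via $\omega_k(v)=\alpha_k\Delta_k$ and comparing with \eqref{equation-on-integral-element-i} (after an index shift) reads off $c_i = d_i - 2\Delta_i\Delta_{i+1}/(\kappa_i s_i^3)$; the corner limit $\kappa_i=\infty$ collapses to $c_i=d_i$, consistent with Proposition \ref{prop-special-int-elem}. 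The main obstacle will be pure bookkeeping: the signs of $N_i$ and of $\kappa_i$ are governed by the convention that $G$ lies on the left of the oriented supporting line, which is what forces $r_i$ antiparallel to the positive tangent $T_i$. Getting these signs straight is essentially the only thing separating the stated formula from its sign-flipped version.
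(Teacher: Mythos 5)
Your proposal is correct, and every intermediate formula you state checks out: $\theta_i(v)=\tfrac12(\alpha_i\Delta_i-\beta_{i+1}\Delta_{i+1})$, $\omega_i(v)=\alpha_i\Delta_i$ on the integral element, $\bar r_i'(v_i)=\kappa_i^{-1}s_i^{-2}\langle v_i,N_i\rangle\,r_i$ from the differentiated tangency constraint, $\langle r_{i-1},N_i\rangle=\Delta_i/s_i$, and the three-term recursion, which after the substitution $\alpha_k=\omega_k/\Delta_k$ and division by $\Delta_i$ becomes exactly \eqref{equation-on-integral-element-i} with the claimed $c_i$. The route differs from the paper's in its organization rather than its substance. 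The paper never writes down $DF$: it differentiates the collinearity relation $r_i=\lambda\bar r_i'$ along a curve in $M$, extracts the key scalar identity $\omega_i=\kappa_i s_i\langle d\bar r_i,r_i\rangle$ from the normal acceleration, then substitutes the coframe expression \eqref{dz-i} for $d\bar r_1$ and invokes Lemma \ref{det-and-inner-product}. You instead linearize the foot-point map implicitly, assemble the Jacobian of $F$ in the moving basis $\{r_{i-1},r_i\}$, and read off the recursion; your identity $\omega_i=s_i\langle dz_i,N_i\rangle$ is equivalent to the paper's \eqref{formula-for-ci-1}, and your use of $r_{i-1}=(d_ir_i-\Delta_ir_{i+1})/\Delta_{i+1}$ is the same identity \eqref{check-special-int-elem} that underlies Lemma \ref{det-and-inner-product}. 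Your version is slightly more robust in the degenerate cases: the implicit differentiation handles $\bar r_i'=0$ uniformly (the paper must treat it separately since it divides by $\bar r_i'$ via $\lambda$), and the corner case $\kappa_i=\infty$ falls out as $DF=-I$. You were right to flag the sign bookkeeping as the only real hazard; the convention that $G$ lies to the left of the oriented chord does force $r_i=-s_iT_i$, and with that all signs land correctly.
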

\begin{proof}
Let $\{z_i(t)\}$ be a curve in $M$ defined by \eqref{integral-manifold} through the point where $t=0$.
Then the quantities $r_i, \bar r_i$, etc., will depend on $t$. For simplicity of notation, we derive the needed formula for $i=1$. By a prime we denote the derivative with respect
to $t$ at $t=0$.
We first assume $\kappa_1<\infty$ and $\bar r'_1\ne 0$.
By the definition of the outer billiard, we put
$r_1=\lambda \bar r'_1$. Then
$r'_1=\lambda'\bar r'_1+ \lambda\bar r''_1$.

Since $\bar r_1 \in \Gamma$, the normal component of the acceleration $\bar r''_1$ has the form
$a_N=\kappa_1 v^2$, here $v=|\bar r'_1|$.
The inner unit normal vector to $\Gamma$ at $\bar r_1$ has the form $N_1=-s_1^{-1}Jr_1$, here $J$ is the counterclockwise rotation by $\pi/2$. Then we have
\[
\omega_1(z')=\det(r'_1,r_1)
=\det(\lambda'\bar r'_1+ \lambda\bar r''_1,r_1)
=\det(\lambda\kappa v^2 N_1,\lambda \bar r'_1)
=\frac{\lambda^2 v^2 \kappa_1}{s_1}\det(-Jr_1,\bar r'_1).
\]
Note $\lambda^2 v^2=s_1^2$, and $\det(-Jx,y)=\<x,y\>$.
Then
$\omega_1(z')=\kappa_1 s_1 \<\bar r'_1,r_1\>$, and
on the integral element we have
\begin{equation}\label{formula-for-ci-1}
\omega_1=\kappa_1 s_1 \<d\bar r_1,r_1\>.
\end{equation}
We have obtained \eqref{formula-for-ci-1} assuming that
$\bar r'_1\ne0$. However, if $\bar r'_1=0$, then \eqref{formula-for-ci-1}
still holds because in this case $\omega_1(z')=0$.
Using \eqref{dz-i}, we write
\[
2d\bar r_1
=dz_1+dz_2
=\Delta_1^{-1}(\omega_1 r_0+\omega_0 r_1)
+\Delta_2^{-1}(\omega_2 r_1+\omega_1 r_2),
\]
\[
2\Delta_1\Delta_2\<d\bar r_1,r_1\>
=\omega_1(\Delta_2\<r_0,r_1\>+\Delta_1\<r_1,r_2\>)
+(\Delta_2\omega_0+\Delta_1\omega_2)s_1^2.
\]
Using Lemma \ref{det-and-inner-product}
we get
\begin{equation}\label{formula-for-ci-2}
2s_1^{-2}\Delta_1\Delta_2\<d\bar r_1,r_1\>
=d_1 \omega_1+\Delta_2\omega_0+\Delta_1\omega_2,
\end{equation}
and using
\eqref{formula-for-ci-1} we get
\begin{equation}\label{formula-for-ci-3}
\frac{2\Delta_1\Delta_2}{\kappa_1 s_1^3}\omega_1
=d_1 \omega_1+\Delta_2\omega_0+\Delta_1\omega_2.
\end{equation}
We have obtained \eqref{formula-for-ci-3} assuming
$\kappa_1<\infty$. However, if $\kappa_1=\infty$,
then \eqref{formula-for-ci-3}
still holds because in this case $d\bar r_1=0$, and
\eqref{formula-for-ci-2} implies \eqref{formula-for-ci-3}.
Now by comparing \eqref{formula-for-ci-3}
with \eqref{equation-on-integral-element-i}
we obtain the desired equation for $c_1$.
\end{proof}

We call an integral element {\em convex} if $c_i\le d_i$.
Our results on the empty interior of the set of periodic
orbits follow from the absence of convex integral elements.

\section{The cases $n=3$ and $n=4$}

We first consider the simple cases $n=3$ and $n=4$.

\begin{thm}
For every triangle $\{z_1,z_2,z_3\}$, there is only one
integral element $c_i=\Delta$, here $\Delta$ is one-half
the area of the triangle. This integral element is not
convex. Hence, the set of period 3 orbits has empty interior.
\end{thm}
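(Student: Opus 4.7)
The plan is to set up the matrix $C$ from \eqref{matrix-C} for $n=3$ on a triangle, determine the $c_i$ from the rank condition, and then compare with $d_i$ to see that convexity fails.

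First I would use the triangle identity $r_1+r_2+r_3=0$, which is immediate from the telescoping sum $\sum (z_i-z_{i+1})/2$. Substituting $r_{i-1}=-r_i-r_{i+1}$ into $\Delta_i=\det(r_{i-1},r_i)$ and $d_i=\det(r_{i-1},r_{i+1})$ yields
\[
\Delta_1=\Delta_2=\Delta_3=\Delta,\qquad d_1=d_2=d_3=-\Delta,
\]
where $\Delta>0$ is one half of the area of the triangle. Hence $C$ has $c_i$ on the diagonal and $\Delta$ in every off-diagonal position.

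Next I would impose $\rank C=n-2=1$. Any $2\times 2$ minor involving a single $c_i$ and three $\Delta$ entries has determinant $\Delta(c_i-\Delta)$, so $c_i=\Delta$ for each $i$; conversely, with $c_i=\Delta$ all three rows of $C$ coincide and the rank is exactly $1$. Thus the integral element is unique, with $c_i=\Delta$, in agreement with Proposition \ref{prop-special-int-elem}, where $-d_i=\Delta$.

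Convexity would require $c_i\le d_i$, that is $\Delta\le -\Delta$; since $\Delta>0$ this fails, so the integral element is not convex, and by Propositions \ref{reduction-to-EDS} and \ref{prop-formula-for-ci} there can be no open set of period-$3$ orbits. No step here is really hard; the only piece of geometry behind the computation is the closure relation $r_1+r_2+r_3=0$, which simultaneously collapses all the $\Delta_i$ to the common value $\Delta$ and produces the sign $d_i=-\Delta$ that destroys the convexity inequality.
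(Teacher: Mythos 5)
Your proposal is correct and follows essentially the same route as the paper: identify $\Delta_i=\Delta$ and $d_i=-\Delta$ for a triangle, use $\rank C=1$ to force $c_i=\Delta$, and observe that $c_i\le d_i$ would give $\Delta\le-\Delta$. You merely supply details the paper leaves implicit (the closure relation $r_1+r_2+r_3=0$ and the $2\times2$ minor computation), so there is nothing to add.
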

\begin{proof}
Note $\Delta_i=\Delta>0$.
Likewise, say $d_1=\Delta_{02}=-\Delta<0$.
Since $\rank C=1$, we have $c_i=\Delta>0$.
Then the convexity means that $0<c_1\le d_1<0$,
which is absurd.
\end{proof}

\begin{thm}
For every convex quadrilateral $\{z_i\}$, there is only one convex integral element $c_i=d_i$, that is, the
midpoints $\bar r_i$ are the corners of $\Gamma$.
Hence, the set of period 4 orbits has empty interior
unless there are 4 corners of $\Gamma$ forming
a parallelogram.
\end{thm}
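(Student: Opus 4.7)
The plan is to enumerate all integral elements for $n=4$ from the rank condition $\rank C = 2$ and then apply the convexity inequalities. For $n=4$ the matrix equation $C\omega=0$ amounts to the four scalar relations \eqref{equation-on-integral-element-i}. First I would use two of them, say those carrying $\omega_3$ and $\omega_4$ as middle terms, to express $\omega_3$ and $\omega_4$ in terms of the independent pair $\omega_1,\omega_2$. Substituting these into the remaining two equations and demanding that each coefficient of $\omega_1$ and $\omega_2$ vanish should produce the constraints $c_3 = -c_1$, $c_4 = -c_2$, and a single quadratic relation of the form $c_1 c_2 = \Delta_1\Delta_3 - \Delta_2\Delta_4$.

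Next I would match this quadratic relation with the symmetry of the $d_i$. Directly from $d_i=\det(r_{i-1},r_{i+1})$ and cyclic indexing one checks $d_1+d_3=0$ and $d_2+d_4=0$; moreover, the Remark following Proposition \ref{prop-special-int-elem} guarantees that $c_i=d_i$ defines an integral element, so substituting $c_i=d_i$ into the quadratic relation supplies the identity $d_1 d_2 = \Delta_1\Delta_3 - \Delta_2\Delta_4$ at no extra cost. Thus the set of integral elements is a one-parameter family cut out by $c_3=-c_1$, $c_4=-c_2$, and $c_1 c_2 = d_1 d_2$.

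Convexity then collapses this family to a point. The inequality $c_1\le d_1$ together with $c_3\le d_3$, rewritten via $c_3=-c_1$ and $d_3=-d_1$, yields $c_1\ge d_1$, so $c_1=d_1$; the analogous argument with indices $2,4$ gives $c_2=d_2$, and then $c_3=d_3$, $c_4=d_4$ follow from the linear relations. Proposition \ref{prop-formula-for-ci} now forces $\kappa_i=\infty$ at every $\bar r_i$, i.e.\ $\Gamma$ has a corner at each midpoint $\bar r_i=(z_i+z_{i+1})/2$. Finally, the identity $\bar r_1 + \bar r_3 = (z_1+z_2+z_3+z_4)/2 = \bar r_2 + \bar r_4$ (Varignon's theorem) shows that these four corners form a parallelogram, completing the argument. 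The only real obstacle is keeping careful track of signs and indices in the elimination step; no conceptual difficulty is anticipated.
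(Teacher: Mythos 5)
Your proposal is correct and follows essentially the same route as the paper: extract the linear relations $c_1+c_3=0$, $c_2+c_4=0$ from the condition $\rank C=2$, pair them with $d_1+d_3=0$, $d_2+d_4=0$, and let the convexity inequalities $c_i\le d_i$ force $c_i=d_i$, whence Proposition \ref{prop-formula-for-ci} gives corners at all midpoints. The only cosmetic differences are that you verify $d_1+d_3=0$ directly from the definition rather than via Proposition \ref{prop-special-int-elem}, and you spell out the Varignon step that the paper leaves implicit.
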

\begin{proof}
Let $\{c_i\}$ define a convex integral element.
Then $\rank C=2$. In particular the 3-minor in the rows
1,2,3 and columns 1,3,4 vanishes. This condition yields
$c_1+c_3=0$. By Proposition \ref{prop-special-int-elem},
the numbers $\{-d_i\}$ always define an integral element,
hence $d_1+d_3=0$. By the convexity condition,
$c_i\le d_i$. Then $0=c_1+c_3 \le d_1+d_3=0$.
Then we must have $c_i=d_i$ for $i=1,3$.
Similarly, it holds for the remaining values $i=2,4$.
\end{proof}

For completeness, we note that for $n=4$,
all integral elements are defined by the equations
\[
c_1+c_3=0,\qquad
c_2+c_4=0,\qquad
c_1 c_2+\Delta_2\Delta_4=\Delta_1\Delta_3.
\]
They form a smooth curve, a hyperbola, unless
$\Delta_2\Delta_4=\Delta_1\Delta_3$, which means
the quadrilateral is a trapezoid.

\section{The case $n=5$}

We describe integral elements for $n=5$.
\begin{prop}\label{prop-int-elem-n5}
For $n=5$, integral elements are defined by the equation
\begin{equation}\label{equation-int-elem-n5}
c_1 c_2=c_4 \Delta_2+\Delta_1\Delta_3
\end{equation}
and the equations obtained from \eqref{equation-int-elem-n5}
by shifting indices.
\end{prop}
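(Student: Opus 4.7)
The plan is to use the characterization of integral elements via the matrix $C$: $\sigma$ is an integral element if and only if there exist parameters $c_i$ with $C\omega|_\sigma=0$ and $\rank C=n-2$. For $n=5$, the lower-left $3\times 3$ minor of $C$ equals $\Delta_4\Delta_5^2\ne 0$, so $\rank C\ge 3$ always holds; hence the rank condition reduces to $\dim\ker C\ge 2$.

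Next, I would parametrize $\ker C$ using the three-term recurrence $\Delta_{j+1}\omega_{j-1}+c_j\omega_j+\Delta_j\omega_{j+1}=0$ read off from the rows of $C$. Specifically, the equations with $j=2,3,4$ can be solved explicitly for $\omega_3,\omega_4,\omega_5$ as linear combinations of the free parameters $\omega_1,\omega_2$. The condition $\dim\ker C\ge 2$ then amounts to demanding that the remaining two equations ($j=1$ and $j=5$) hold identically in $\omega_1,\omega_2$, producing four scalar conditions in total (vanishing of the coefficients of $\omega_1$ and of $\omega_2$ in each).

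I expect equation $j=1$ to yield $c_3c_4=c_1\Delta_4+\Delta_3\Delta_5$ from the coefficient of $\omega_1$, and, after using this to simplify, the stated equation $c_1c_2=c_4\Delta_2+\Delta_1\Delta_3$ from the coefficient of $\omega_2$. Equation $j=5$ should similarly give $c_5c_1=c_3\Delta_1+\Delta_5\Delta_2$ and $c_4c_5=c_2\Delta_5+\Delta_1\Delta_4$. The remaining fifth cyclic shift $c_2c_3=c_5\Delta_3+\Delta_2\Delta_4$ then follows algebraically from these four by eliminating $c_1$ between the first and third shifts and substituting the fourth. Since every step in the computation is reversible, the five cyclic shifts are equivalent to $\dim\ker C\ge 2$, proving the proposition.

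The main obstacle will be the algebraic bookkeeping: the nested substitutions for $\omega_5$ make each raw closure condition bilinear in the $c_i$, and it takes a careful simplification using an earlier shift to recognize each of the four raw conditions as a clean cyclic shift of the stated equation. Showing that the fifth cyclic shift is automatic from the other four is a short but separate verification that must be carried out explicitly rather than being visible from the parametrization itself.
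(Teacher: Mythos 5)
Your proposal is correct and takes essentially the same approach as the paper: both reduce the condition $\rank C=3$ to explicit polynomial identities by exploiting the invertible $3\times3$ corner block, the paper by expressing columns 4 and 5 of $C$ as combinations of columns 1--3, you dually by parametrizing $\ker C$ via the three-term recurrence and closing it up with rows $j=1,5$. Your four predicted raw conditions do simplify to the cyclic shifts you name, and the fifth shift is indeed an algebraic consequence of the other four, so the argument goes through as written.
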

\begin{proof}
The first three columns of the matrix $C$ given by
\eqref{matrix-C} are linearly independent.
Then $\rank C=n-2=3$
if and only if the columns 4 and 5 are linear combinations
of columns 1, 2, and 3.
We include calculations for column 4 and leave the rest to
the reader. Denote the coefficients of the linear combination by $a_1, a_2, a_3$. Then we have
\[
a_1 c_1+a_2\Delta_1=0,\quad
a_1\Delta_3+a_2 c_2 +a_3\Delta_2=0,\quad
a_2\Delta_4+a_3 c_3=\Delta_3,\quad
a_3\Delta_5=c_4,\quad
a_1\Delta_5=\Delta_1.
\]
We find
$
a_1=\Delta_1/\Delta_5,\;
a_3=c_4/\Delta_5,\;
a_2=-c_1/\Delta_5
$.
Plugging them in the remaining equations, we obtain
\[
\Delta_1\Delta_3-c_1c_2+c_4\Delta_2=0,\qquad
\Delta_3\Delta_5-c_3c_4+c_1\Delta_4=0
\]
as desired.
\end{proof}

For a periodic orbit in outer billiard, we can introduce a winding number if we regard the polygon as a path.
Denote the interior angles by $\alpha_i$. Then
the angle from $r_{i-1}$ to $r_i$, that is,
the exterior angle at $z_i$ will be $\delta_i=\pi-\alpha_i$.
Then the winding number will be $m=(2\pi)^{-1}\sum \delta_i$.
A $(n,m)$-{\it orbit} is a period $n$ orbit with winding
number $m$. We have $\sum\alpha_i=\pi(n-2m)$, hence
$0<2m<n$. In particular, for $n=5$, there may be
$(5,1)$ and $(5,2)$-orbits.

Note that the angle from $r_{i-1}$ to $r_{i+1}$ is equal to $\delta_i+\delta_{i+1}=2\pi-\alpha_i-\alpha_{i+1}$.
Recall the notation $s_i=|r_i|$. Then we have
\begin{equation}\label{di}
d_i=\det(r_{i-1},r_{i+1})
=-s_{i-1} s_{i+1}\sin(\alpha_i+\alpha_{i+1}).
\end{equation}

\begin{thm}
The set of $(5,2)$-orbits has empty interior.
\end{thm}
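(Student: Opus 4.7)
The plan is to show that no convex integral element can arise from a $(5,2)$-orbit; given the reduction in Section 2 and Proposition \ref{prop-formula-for-ci}, this immediately implies the set of $(5,2)$-orbits has empty interior. I will carry out the argument in three steps.

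First, I will pin down the signs of the $d_i$. For a $(5,2)$-orbit the interior angles satisfy $\sum_{i=1}^{5}\alpha_i=\pi(5-2\cdot2)=\pi$, so each consecutive pair obeys $0<\alpha_i+\alpha_{i+1}<\pi$, giving $\sin(\alpha_i+\alpha_{i+1})>0$. Formula \eqref{di} then forces $d_i<0$ for every $i\in\Z/5\Z$, and the convexity condition $c_i\le d_i$ in turn forces $c_i<0$ for every $i$. This uniform sign is precisely what makes $(5,2)$ tractable: for $(5,1)$-orbits one has $\sum\alpha_i=3\pi$ and the $d_i$ need not have a fixed sign.

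Second, I will extract a polygon identity from Proposition \ref{prop-special-int-elem}. Since $c_i=-d_i$ always defines an integral element (regardless of convexity), it must satisfy equation \eqref{equation-int-elem-n5}. Substituting $c_i=-d_i$ into $c_1c_2=c_4\Delta_2+\Delta_1\Delta_3$ produces the algebraic identity
\[
d_1d_2+d_4\Delta_2=\Delta_1\Delta_3,
\]
which holds for every polygon.

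Third, I will combine convexity with this identity. From $c_i\le d_i<0$ I get $|c_i|\ge|d_i|$, hence $c_1c_2\ge d_1d_2$; and $\Delta_2>0$ with $c_4\le d_4$ gives $c_4\Delta_2\le d_4\Delta_2$. Subtracting,
\[
c_1c_2-c_4\Delta_2\ge d_1d_2-d_4\Delta_2.
\]
The left-hand side equals $\Delta_1\Delta_3$ by \eqref{equation-int-elem-n5}, while the right-hand side equals $\Delta_1\Delta_3-2d_4\Delta_2$ by the identity above. This forces $-2d_4\Delta_2\le 0$, i.e.\ $d_4\Delta_2\ge 0$, contradicting $d_4<0$ and $\Delta_2>0$.

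I do not anticipate a real obstacle here; the proof is essentially half a page once the right observation is in hand. The only non-obvious ingredient is recognising that Proposition \ref{prop-special-int-elem} does double duty: it furnishes a distinguished integral element and, through that same substitution, a polynomial identity among the $d_i$ and $\Delta_i$ which exactly matches the bilinear shape of \eqref{equation-int-elem-n5} and of the convexity inequalities. One could in principle use any of the five cyclic shifts of \eqref{equation-int-elem-n5}, but a single one suffices for the contradiction.
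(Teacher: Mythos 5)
Your proposal is correct and follows essentially the same route as the paper: sign analysis of the $d_i$ via \eqref{di}, the identity obtained by substituting the special integral element $c_i=-d_i$ of Proposition \ref{prop-special-int-elem} into \eqref{equation-int-elem-n5}, and the convexity inequalities $c_i\le d_i<0$ to force a sign contradiction. The only difference is cosmetic: the paper subtracts the two instances of \eqref{equation-int-elem-n5} to get $c_1c_2-d_1d_2=(c_4+d_4)\Delta_2$ and compares signs of the two sides, while you rearrange the same inequalities to conclude $d_4\Delta_2\ge0$.
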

\begin{proof}
We show that for $(5,2)$-orbits, there are no convex
integral elements. Let $\{z_i\}$ be a $(5,2)$-orbit,
and let $\{c_i\}$ define a convex integral element.
By Proposition \ref{prop-special-int-elem},
the numbers $\{-d_i\}$ define an integral element,
hence by \eqref{equation-int-elem-n5} we obtain
\begin{equation*}
d_1d_2=-d_4\Delta_2+\Delta_1\Delta_3.
\end{equation*}
Subtracting the later from \eqref{equation-int-elem-n5} we obtain
\begin{equation}\label{c1c2-d1d2}
c_1c_2-d_1d_2=(c_4+d_4)\Delta_2.
\end{equation}
Since $\sum\alpha_i=\pi(n-2m)=\pi$, we have
$\alpha_i+\alpha_{i+1}<\pi$. Then by \eqref{di}, $d_i<0$.
By convexity,
\[
c_i\le d_i<0,\qquad
c_1c_2-d_1d_2\ge 0,\qquad
c_4+d_4<0,
\]
contradicting \eqref{c1c2-d1d2}.
\end{proof}

\begin{remark}
{\rm
For (5,1) orbits our method does not work because
convex integral elements do exist.}
\end{remark}

\section{The case $n=6$}

For even $n$, there are open sets of period $n$ orbits
in which all the midpoints $\bar r_i$ are corners of $\Gamma$.
For instance, if $\Gamma$ is a triangle, then there is
an open set of period 6 orbits.

We first describe integral elements for $n=6$.
\begin{prop}\label{prop-int-elem-n6}
For $n=6$, integral elements are defined by the equations
\begin{equation}\label{equation-int-elem-n6}
\Delta_5(c_1c_2-\Delta_1\Delta_3)
+\Delta_2(c_4c_5-\Delta_4\Delta_6)=0,\qquad
\Delta_5(c_1\Delta_4-c_5\Delta_3)
+c_3(c_4c_5-\Delta_4\Delta_6)=0,
\end{equation}
and the equations obtained from \eqref{equation-int-elem-n6}
by shifting indices.
\end{prop}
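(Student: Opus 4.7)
My plan is to follow the template of Proposition \ref{prop-int-elem-n5}. The integral element condition is $C\omega|_\sigma=0$ with $\rank C = n-2 = 4$, and I will cut out the set of valid $(c_1,\ldots,c_6)$ by exhibiting four columns of $C$ as a basis of its column space and expressing the remaining two columns in that span.

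The preliminary step is to verify that the columns $C_1,C_2,C_3,C_4$ of the matrix in \eqref{matrix-C} are linearly independent. The $4\times 4$ minor on rows $3,4,5,6$ and columns $1,2,3,4$ equals $-\Delta_4\Delta_5\Delta_6^{\,2}$, which is nonzero since all $\Delta_i>0$. Now write $C_5 = \sum_{j=1}^{4}\alpha_j C_j$; the rows of $C$ containing only one unknown solve for the coefficients successively, in the order rows $5,6,1,2$, giving $\alpha_4=c_5/\Delta_6$, $\alpha_1=\Delta_1/\Delta_6$, $\alpha_2=-c_1/\Delta_6$, and $\alpha_3=(c_1c_2-\Delta_1\Delta_3)/(\Delta_2\Delta_6)$. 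The two remaining rows then supply the consistency conditions. Row $4$, after clearing denominators, is exactly the first equation of \eqref{equation-int-elem-n6}; row $3$, after clearing denominators and using that first equation to eliminate the factor $c_1c_2-\Delta_1\Delta_3$, collapses to the second equation of \eqref{equation-int-elem-n6}. An analogous expansion $C_6 = \sum_j \beta_j C_j$ produces two further consistency conditions; a direct calculation identifies them with the shifts of the two stated equations (specifically, the first equation shifted by $2$, and the second equation shifted by $3$), confirming that the cyclic orbit of the two listed equations is what comes out of the column-expansion procedure.

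The main obstacle is algebraic bookkeeping: tracking which derived constraint corresponds to which shift of the two stated equations, and verifying that these conditions are both necessary and sufficient for $\rank C\le 4$. Since the $\alpha_j$ and $\beta_j$ are each uniquely determined by four of the six rows, the remaining pair of rows in each expansion gives the \emph{complete} set of necessary and sufficient conditions for $C_5$ and $C_6$ to lie in the linear span of $C_1,\ldots,C_4$. Combined with the linear independence of $C_1,\ldots,C_4$, this describes the locus $\rank C=4$ exactly. Cyclic symmetry of the construction then guarantees that the full defining ideal is generated by the two equations in \eqref{equation-int-elem-n6} together with their shifts, as claimed.
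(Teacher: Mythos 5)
Your proposal is correct and is precisely the argument the paper intends: the paper leaves this proof to the reader as "similar to that of Proposition \ref{prop-int-elem-n5}," and you carry out exactly that column-expansion scheme (columns $5,6$ of \eqref{matrix-C} expressed in the span of columns $1$--$4$, whose independence you certify by the nonzero minor $-\Delta_4\Delta_5\Delta_6^2$). Your computed coefficients $\alpha_j$ and the identification of the leftover rows with \eqref{equation-int-elem-n6} and its shifts all check out.
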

The proof is similar to that of Proposition \ref{prop-int-elem-n5}, and we leave it to the reader.

For $n=6$, there may be (6,1) and (6,2)-orbits.
We call a (6,2)-orbit {\it paradoxical} if
$\alpha_{i-1}+\alpha_i>\pi$ and $\alpha_{i+1}+\alpha_i>\pi$
for some $i$. The author does not know whether paradoxical
orbits exist for a convex curve $\Gamma$.
\begin{thm}

The set of non-paradoxical (6,2)-orbits has empty interior
unless there is an orbit $\{z_i\}$ so that the midpoints
$\bar r_i$ are the corners of $\Gamma$.
\end{thm}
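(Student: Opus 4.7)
The plan is to mirror the $(5,2)$ strategy. Let $\{c_i\}$ parametrize a convex integral element arising from a non-paradoxical $(6,2)$-orbit. By Proposition \ref{prop-special-int-elem} (and the Remark following it, since $n=6$ is even) both $\{-d_i\}$ and $\{d_i\}$ define integral elements; because the first equation of Proposition \ref{prop-int-elem-n6} is quadratic in the $c_i$, the two substitutions yield the same identity. Subtracting this identity from the generic equation gives the three relations
$$\Delta_5(c_1c_2-d_1d_2)+\Delta_2(c_4c_5-d_4d_5)=0$$
together with its two cyclic shifts (shifting by three returns the same relation).

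Next I carry out a sign analysis. For a $(6,2)$-orbit $\sum\alpha_i=2\pi$, and by \eqref{di}, $d_i<0$ exactly when $\alpha_i+\alpha_{i+1}<\pi$. Non-paradoxicality translates to the statement that no two cyclically consecutive sums $\alpha_i+\alpha_{i+1}$ both exceed $\pi$, i.e.\ in every consecutive pair $(d_i,d_{i+1})$ not both are positive. The key algebraic observation is that if $d_i<0$ and $d_{i+1}<0$ (call this a \emph{good} pair), then convexity $c_i\le d_i<0$ and $c_{i+1}\le d_{i+1}<0$ implies $|c_i|\ge|d_i|$, $|c_{i+1}|\ge|d_{i+1}|$, hence $c_ic_{i+1}\ge d_id_{i+1}>0$, with equality iff $c_i=d_i$ and $c_{i+1}=d_{i+1}$.

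When every $\alpha_i+\alpha_{i+1}<\pi$, every pair is good; each of the three relations is a sum of two non-negative quantities equal to zero, so both vanish and the equality case forces $c_i=d_i$ for all $i$. For the remaining cases I would enumerate the configurations of $S=\{i:\alpha_i+\alpha_{i+1}>\pi\}$ (with $|S|\le 3$ and no two cyclically consecutive members). Whenever one of the three relations has both of its pairs good, it pins the four $c_i$ appearing in it to the corresponding $d_i$; substituting these back into the remaining relations converts any mixed-sign term $c_ic_{i+1}-d_id_{i+1}$ into an expression of the form $d_i(c_{i+1}-d_{i+1})$ whose sign is determined by $d_i$ alone and then forces the remaining $c_j=d_j$.

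The main obstacle is the pair of configurations with $|S|=3$, namely $S=\{1,3,5\}$ and $S=\{2,4,6\}$, in which every consecutive pair is mixed-sign and the three first-type relations alone are not sharp enough. For these I would bring in the second equation of Proposition \ref{prop-int-elem-n6} together with its cyclic shifts, subtract the corresponding $c_i=\pm d_i$ identity, set $c_i=d_i-u_i$ with $u_i\ge 0$ by convexity, and combine with the three first-type relations. Using that the signs of the $d_i$ alternate around the cycle in this highly symmetric configuration, a sign-tracking argument in the resulting linear-plus-quadratic system in the $u_i$ should show that the only solution compatible with $u_i\ge 0$ is $u_i\equiv 0$. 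In every case $c_i=d_i$ for all $i$, and Proposition \ref{prop-formula-for-ci} then gives $\kappa_i=\infty$ at every midpoint, i.e.\ every $\bar r_i$ is a corner of $\Gamma$---the \emph{unless}-clause of the theorem.
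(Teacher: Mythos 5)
Your overall strategy is the paper's: subtract the identity satisfied by the special integral element of Proposition \ref{prop-special-int-elem} from the first equation of Proposition \ref{prop-int-elem-n6}, force $c_i=d_i$ by a sign analysis, and conclude via Proposition \ref{prop-formula-for-ci} that every $\bar r_i$ is a corner. However, your case enumeration is wrong, and the error leaves the hardest part of your argument unexecuted. You allow $S=\{i:\alpha_i+\alpha_{i+1}>\pi\}$ to have up to three elements and single out $S=\{1,3,5\}$ and $S=\{2,4,6\}$ as the ``main obstacle,'' to be handled by an unproved ``sign-tracking argument'' involving the second equation of \eqref{equation-int-elem-n6}. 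In fact $|S|\le 1$: if $i,j\in S$ are not cyclically consecutive, the index pairs $\{i,i+1\}$ and $\{j,j+1\}$ are disjoint, so $\sum_k\alpha_k\ge(\alpha_i+\alpha_{i+1})+(\alpha_j+\alpha_{j+1})>2\pi$, contradicting $\sum_k\alpha_k=2\pi$ for a $(6,2)$-orbit; cyclically consecutive members of $S$ are excluded by non-paradoxicality. Hence at most one $d_i$ is positive, your ``main obstacle'' is vacuous, and the second equation of Proposition \ref{prop-int-elem-n6} is never needed --- which is exactly how the paper argues.

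A second, smaller gap: your equality analysis for a ``good pair'' needs $d_i<0$ and $d_{i+1}<0$ strictly. When some $d_i=0$, the implication $c_ic_{i+1}=d_id_{i+1}\Rightarrow(c_i,c_{i+1})=(d_i,d_{i+1})$ fails: one gets $c_i=0$ but $c_{i+1}$ is not pinned by that single equation. The paper closes this by noting that at most two of the $d_i$ can vanish and they must be consecutive, and by feeding the already-established equalities $c_j=d_j$ into the shifted relations, where the remaining terms become $\Delta_\cdot\, d_j(c_k-d_k)$ with a definite sign; in particular, in the case $d_6>0$ one must check that $c_6\ne d_6$ would force $d_1=d_5=0$, contradicting $\sum\alpha_i=2\pi$. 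You gesture at this substitution step but do not carry it out. With the corrected case count and the degenerate $d_i=0$ configurations handled explicitly, your proof coincides with the paper's.
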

\begin{proof}
We show that for non-paradoxical $(6,2)$-orbits,
there are no convex integral elements except when all
the midpoints $\bar r_i$ are the corners of $\Gamma$.
Let $\{z_i\}$ be such an orbit,
and let $\{c_i\}$ define a convex integral element.
We use only the first equation in \eqref{equation-int-elem-n6}, which we rewrite in the form
\begin{equation}\label{equation-int-elem-n6-rewritten}
\Delta_5c_1c_2+\Delta_2c_4c_5
=\Delta_1\Delta_3\Delta_5+\Delta_2\Delta_4\Delta_6.
\end{equation}
By Proposition \ref{prop-special-int-elem},
the numbers $\{-d_i\}$ also define an integral element,
hence by \eqref{equation-int-elem-n6-rewritten} we obtain
\begin{equation*}
\Delta_5d_1d_2+\Delta_2d_4d_5
=\Delta_1\Delta_3\Delta_5+\Delta_2\Delta_4\Delta_6.
\end{equation*}
Subtracting the later from \eqref{equation-int-elem-n6-rewritten} we obtain
\begin{equation}\label{c1c2-d1d2-n6}
\Delta_5(c_1c_2-d_1d_2)
+\Delta_2(c_4c_5-d_4d_5)=0.
\end{equation}
Note that for a (6,2)-orbit, $\sum\alpha_i=2\pi$.
By \eqref{di}, $d_i>0$ exactly when
$\alpha_i+\alpha_{i+1}>\pi$.
If two of the numbers $d_i$ are positive, then they
must be consecutive. Since the orbit is non-paradoxical,
they cannot be consecutive either. Hence at most one
of these numbers may be positive. For definiteness,
let it be $d_6$.

By convexity, for all $i\ne 6$,
$c_i\le d_i\le 0$, hence
$c_1c_2-d_1d_2\ge0$ and $c_4c_5-d_4d_5\ge0$.
If $\Gamma$ is smooth, then these inequalities are strict,
contradicting \eqref{c1c2-d1d2-n6}, and we stop here.

In the general case, we have
\[
c_1c_2=d_1d_2, \qquad
c_4c_5=d_4d_5.
\]
We first consider the case in which indeed $d_6>0$.
Then since $\sum\alpha_i=2\pi$, we have $d_2<0$ and $d_4<0$.
Then we have $c_i=d_i$ for $i=1,2,4,5$. We show that it also
holds for $i=3,6$.
By shifting indices in \eqref{equation-int-elem-n6-rewritten},
we obtain
\begin{equation*}\label{equation-int-elem-n6-shifted}
\Delta_6c_2c_3+\Delta_3c_5c_6
=\Delta_1c_3c_4+\Delta_4c_6c_1
=\Delta_1\Delta_3\Delta_5+\Delta_2\Delta_4\Delta_6.
\end{equation*}
Since these equations hold for $d_i$ in place of $c_i$,
and already $c_i=d_i$ for $i=1,2,4,5$, we obtain
\[
\Delta_6d_2(c_3-d_3)+\Delta_3d_5(c_6-d_6)
=\Delta_1d_4(c_3-d_3)+\Delta_4d_1(c_6-d_6)=0,
\]
in which all terms are non-negative.
Since $d_2<0$, we have $c_3=d_3$. If $c_6\ne d_6$,
then $d_1=d_5=0$, which contradicts $\sum\alpha_i=2\pi$.
Hence $c_i=d_i$ for all $i$, as desired.

Finally, we consider the case in which all $d_i\le0$,
including $d_6$. Then arguing as above, we have
$c_ic_{i+1}=d_id_{i+1}$ for all $i$.
If some $d_i=0$, then there are at most two of them,
and they must be consecutive.
Then again $c_i=d_i$ for all $i$, as desired.
\end{proof}

\begin{remark}
{\rm
For (6,1) orbits our method does not work because
there exist convex integral elements with $c_i\ne d_i$.
}
\end{remark}

\end{document}